\def\NZQ{\mathbb}               
\def\ZZ{{\NZQ Z}}
\def\RR{{\NZQ R}}
\def\Pc{{\mathcal P}}
\def\Qc{{\mathcal Q}}
\def\Bc{{\mathcal B}}
\def\Cc{{\mathcal C}}
\def\eb{{\mathbf e}}
\def\wb{{\mathbf w}}
\def\opn#1#2{\def#1{\operatorname{#2}}} 
\opn\chara{char} \opn\length{\ell} \opn\pd{pd} \opn\rk{rk}
\opn\projdim{proj\,dim} \opn\injdim{inj\,dim} \opn\rank{rank}
\opn\depth{depth} \opn\grade{grade} \opn\height{height}
\opn\embdim{emb\,dim} \opn\codim{codim}
\opn\Cl{Cl}
\opn\Tr{Tr} \opn\bigrank{big\,rank}
\opn\superheight{superheight}\opn\lcm{lcm}
\opn\trdeg{tr\,deg}
\opn\rdeg{rdeg}
	\opn\reg{reg} \opn\lreg{lreg} \opn\ini{in} \opn\lpd{lpd}
	\opn\size{size} \opn\sdepth{sdepth}
	\opn\link{link}\opn\fdepth{fdepth}\opn\lex{lex}
	\opn\tr{tr}
	\opn\type{type}
	\opn\gap{gap}
	\opn\arithdeg{arith-deg}
	\opn\revlex{revlex}
	\opn\div{div} \opn\Div{Div} \opn\cl{cl} \opn\Cl{Cl}
	\opn\Spec{Spec} \opn\Supp{Supp} \opn\supp{supp} \opn\Sing{Sing}
	\opn\Ass{Ass} \opn\Min{Min}\opn\Mon{Mon}
	\opn\Ann{Ann} \opn\Rad{Rad} \opn\Soc{Soc}
	\opn\Im{Im} \opn\Ker{Ker} \opn\Coker{Coker} \opn\Am{Am}
	\opn\Hom{Hom} \opn\Tor{Tor} \opn\Ext{Ext} \opn\End{End}
	\opn\Aut{Aut} \opn\id{id}
	\opn\nat{nat}
	\opn\pff{pf}
	\opn\Pf{Pf} \opn\GL{GL} \opn\SL{SL} \opn\mod{mod} \opn\ord{ord}
	\opn\Gin{Gin} \opn\Hilb{Hilb}\opn\sort{sort}
	\opn\PF{PF}\opn\Ap{Ap}
	\opn\mult{mult}
	\opn\bight{bight}
	\opn\div{div}
	\opn\Div{Div}
	\opn\aff{aff}
	\opn\relint{relint} \opn\st{st}
	\opn\lk{lk} \opn\cn{cn} \opn\core{core} \opn\vol{vol}  \opn\inp{inp} \opn\nilpot{nilpot}
	\opn\link{link} \opn\star{star}\opn\lex{lex}\opn\set{set}
	\opn\width{wd}
	\opn\Fr{F}
	\opn\QF{QF}
	\opn\G{G}
	\opn\type{type}\opn\res{res}
	\opn\conv{conv}
	\opn\Int{Int}
	\opn\Deg{Deg}
	\opn\Sym{Sym}
	\opn\Con{Con}
	\opn\gr{gr}
	\def\pot#1#2{#1[\kern-0.28ex[#2]\kern-0.28ex]}
	\opn\dirlim{\underrightarrow{\lim}}
	\opn\inivlim{\underleftarrow{\lim}}
	\def\Implies{\ifmmode\Longrightarrow \else
		\unskip${}\Longrightarrow{}$\ignorespaces\fi}
	\def\implies{\ifmmode\Rightarrow \else
		\unskip${}\Rightarrow{}$\ignorespaces\fi}
	\def\iff{\ifmmode\Longleftrightarrow \else
		\unskip${}\Longleftrightarrow{}$\ignorespaces\fi}
	\newtheorem{Theorem}{Theorem}[section]
	\newtheorem{Lemma}[Theorem]{Lemma}
	\newtheorem{Corollary}[Theorem]{Corollary}
	\newtheorem{Conjecture}[Theorem]{Conjecture}
	\theoremstyle{definition}
	\newtheorem{Example}[Theorem]{Example}
	\let\epsilon\varepsilon
	\let\kappa=\varkappa
	\opn\dis{dis}
	\def\pnt{{\raise0.5mm\hbox{\large\bf.}}}
	\opn\Lex{Lex}
\begin{document}

\title{Lattice polytopes with the minimal volume}
\author{Ginji Hamano, Ichiro Sainose and Takayuki Hibi}
\address{Ginji Hamano, School of Science and Engineering, Tokyo Denki University, 
Saitama 350-0394, Japan}
\email{18hz002@ms.dendai.ac.jp}
\address{Ichiro Sainose, 
Hiroshima Municipal Motomachi Senior High School, 
Hiroshima, 730-0005, Japan}
\email{sainose23517257@nifty.com}
\address{Takayuki Hibi, Department of Pure and Applied Mathematics, Graduate School of Information Science and Technology, Osaka University, Suita, Osaka 565-0871, Japan}
\email{hibi@math.sci.osaka-u.ac.jp}
\dedicatory{}
\keywords{lattice polytope, Castelnuovo polytope}
\subjclass[2010]{Primary 52B20; Secondary 05E40}
\begin{abstract}
Let $\Pc \subset \RR^d$ be a lattice polytope of dimension $d$.  Let $b(\Pc)$ denote the number of lattice points belonging to the boundary of $\Pc$ and $c(\Pc)$ that to the interior of $\Pc$.  It follows from the lower bound theorem of Ehrhart polynomials that, when $c > 0$, 
\[
{\rm vol}(\Pc) \geq (d \cdot c(\Pc) + (d-1) \cdot b(\Pc) - d^2 + 2)/d!,
\]
where ${\rm vol}(\Pc)$ is the (Lebesgue) volume of $\Pc$.  Pick's formula guarantees that, when $d = 2$, the above inequality is an equality.  
In the present paper several classes of lattice polytopes for which the equality here holds will be presented. 
\end{abstract}	
\maketitle
\thispagestyle{empty}

\section*{Introduction}
Recall that a {\em lattice point} of $\RR^d$ is a point belonging to $\ZZ^d$.  Let $\Pc \subset \RR^d$ be a {\em lattice polytope} of dimension $d$.  In other words, $\Pc$ is a convex polytope of dimension $d$ each of whose vertices is a lattice point.  Let $b(\Pc)$ denote the number of lattice points belonging to the boundary of $\Pc$ and $c(\Pc)$ the number of lattice points belonging to the interior of $\Pc$.  The lower bound theorem of Ehrhart polynomials \cite{hibi} guarantees that, when $c > 0$, 
\begin{eqnarray}
\label{formula}
{\rm vol}(\Pc) \geq (d \cdot c(\Pc) + (d-1) \cdot b(\Pc) - d^2 + 2)/d!,
\end{eqnarray}
where ${\rm vol}(\Pc)$ is the (Lebesgue) volume of $\Pc$.  Even though the argument done in \cite{hibi} is rather complicated with deep techniques on polytopes, a short and elementary proof of (\ref{formula}) is presented in \cite{SHEH}.  Pick's formula says that the inequality (\ref{formula}) is an equality when $d = 2$.  

A lattice polytope $\Pc \subset \RR^d$ of dimension $d$ is called {\em Castelnuovo} \cite{kawaguchi} if the equality holds in (\ref{formula}).  Only a few classes of Castelnuovo polytopes are known.  A triplet of integers $(b, c, d)$ with $b \geq c+d+1, \, c \geq 1$ and $d \geq 3$ is called {\em Castelnuovo} if there exists a Castelnuovo polytope $\Pc \subset \RR^d$ of dimension $d$ with $b = b(\Pc)$ and $c = c(\Pc)$.  One of the reasonable problems on Castelnuovo polytopes is to find all of the Castelnuovo triplets.  In fact, to find the complete list of the Castelnuovo triplets establishes the foundation to classify the Castelnuovo polytopes.  

When $d = 2$, Scott \cite{Scott} finds all of the Castelnuovo triplets.  In fact, the Castelnuovo triplets with $d = 2$ are (i) $(b,1,2)$ with $3 \leq b \leq 9$ and (ii) $(b,c,2)$ with $c \geq 2$ and $3 \leq b \leq 2c + 6$.
  
On the other hand, it is known \cite{hibi_tsuchiya} that the triplet of integers $(d+1, c, d)$, where $c \geq 1$ and $d \geq 3$, are Castelnuovo pairs.  In the present paper, several classes of Castelnuovo polytopes and a partial list of the Castelnuovo triplets will be presented.     

\section{Castelnuovo polytopes of bipyramid type}
Let $\eb_1, \ldots, \eb_d$ denote the canonical unit coordinate vectors of $\RR^d$.  Fix integers $c \geq 1, \, d \geq 3$ and $0 \leq n < cd$.  We introduce the lattice polytope $$\Pc_{c,d}(n) \subset \RR^d,$$ called {\em of bipyramid type}, of dimension $d$ whose vertices are 
\[
{\bf 0}, \, \, \eb_1, \ldots, \eb_{d-1}, \, \, \eb_d + n \sum_{i=1}^{d} \eb_i, \, \, \eb_d + cd \sum_{i=1}^{d} \eb_i,
\]
where ${\bf 0}$ is the origin of $\RR^d$.  

\begin{Example}
The vertices of $\Pc_{3,4}(5) \subset \RR^4$ are 
\[
(0,0,0,0), (1,0,0,0), (0,1,0,0), (0,0,1,0), (5,5,5,6), (12,12,12,13).
\]
One has $b(\Pc_{3,4}(5)) = 12, \, c(\Pc_{3,4}(5)) = 3$ and ${\rm vol}(\Pc_{3,4}(5)) = 34/4!$.  Hence $\Pc_{3,4}(5)$ is Castelnuovo.
\end{Example}

\begin{Lemma}
\label{interior}
Each of the lattice points 
\[
\sum_{i=1}^{d} \eb_i, \, \, 2 \sum_{i=1}^{d} \eb_i, \ldots, c \sum_{i=1}^{d} \eb_i
\]
belongs to the interior of $\Pc_{c,d}(n)$.
\end{Lemma}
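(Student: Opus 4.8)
The plan is to show, for each integer $k$ with $1 \le k \le c$, that the point $p_k := k\sum_{i=1}^{d}\eb_i = (k,\dots,k)$ can be written as a convex combination of the $d+2$ vertices of $\Pc_{c,d}(n)$ in which every coefficient is strictly positive. Since $\Pc_{c,d}(n)$ is full-dimensional and the listed points are exactly its vertices, the standard description of the interior of a convex hull---a point lies in the interior precisely when it is a convex combination of all the vertices with strictly positive coefficients---then yields that $p_k$ lies in the interior at once. Note also that $p_k \in \ZZ^d$, so these are genuine lattice points as required.

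Write $v_- = \eb_d + n\sum_{i=1}^{d}\eb_i$ and $v_+ = \eb_d + cd\sum_{i=1}^{d}\eb_i$ for the two apices, and look for
\[
p_k = \lambda_0 \mathbf{0} + \sum_{i=1}^{d-1}\lambda_i\eb_i + \mu\, v_- + \nu\, v_+ .
\]
Exploiting the symmetry of the configuration in the first $d-1$ coordinates, I would impose $\lambda_1 = \cdots = \lambda_{d-1} =: \lambda$. Comparing the first $d-1$ coordinates then gives the single equation $\lambda + \mu n + \nu cd = k$, comparing the last coordinate gives $\mu(n+1) + \nu(cd+1) = k$, and the requirement that the coefficients sum to $1$ gives $\lambda_0 = 1 - d\lambda$. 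Subtracting the first two relations forces $\lambda = \mu + \nu$, so the entire system collapses to the single constraint
\[
\mu(n+1) + \nu(cd+1) = k ,
\]
with $\lambda = \mu+\nu$ and $\lambda_0 = 1 - d(\mu+\nu)$ then determined.

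It remains to select a point $(\mu,\nu)$ on this constraint line making all coefficients positive, i.e. $\mu>0$, $\nu>0$ and $\mu+\nu < 1/d$ (the last being equivalent to $\lambda_0>0$ and forcing $\lambda = \mu+\nu>0$ as well). The crux is the estimate that the minimum of $\mu+\nu$ along the line subject to $\mu,\nu \ge 0$ is attained at $\mu=0$, $\nu = k/(cd+1)$, and that
\[
\frac{k}{cd+1} \le \frac{c}{cd+1} < \frac{1}{d},
\]
using $k\le c$ and $cd<cd+1$. Because the hypothesis $n<cd$ makes the coefficient $(cd-n)/(cd+1)$ of $\mu$ positive, $\mu+\nu$ increases as $\mu$ moves off $0$; hence a sufficiently small $\mu>0$ keeps $\mu+\nu<1/d$ while leaving $\nu = (k-\mu(n+1))/(cd+1)>0$. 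This produces the desired strictly positive convex combination.

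I expect the only real content to lie in this last feasibility check. The coordinate bookkeeping is routine once the symmetric ansatz is fixed, and the essential point is the strict inequality $c/(cd+1)<1/d$ together with the role of the hypothesis $n<cd$, which is exactly what allows positivity of $\lambda_0$, of $\mu$ and $\nu$, and of the shared coefficient $\lambda$ to be arranged simultaneously.
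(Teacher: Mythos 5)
Your proof is correct and follows essentially the same route as the paper: the paper's proof exhibits exactly your $\mu=0$ solution, writing $p_k$ as a strictly positive convex combination of the $d+1$ affinely independent points ${\bf 0}, \eb_1,\ldots,\eb_{d-1}, \eb_d+cd\sum_{i=1}^{d}\eb_i$ and concluding that $p_k$ lies in the interior of that full-dimensional simplex, hence of $\Pc_{c,d}(n)$. Your perturbation to a small $\mu>0$, which makes all $d+2$ coefficients strictly positive, is a harmless variant of the same idea.
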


\begin{proof}
Let $1 \leq j \leq c$.  Then 
\[
j \sum_{i=1}^{d} \eb_i = \frac{(c - j)d + 1}{\,cd+1\,} {\bf 0} + \sum_{i=1}^{d-1} \frac{j}{\,cd+1\,} \eb_i + \frac{j}{\,cd+1\,} (\eb_d + cd \sum_{i=1}^{d} \eb_i)
\]
belongs to the interior of a simplex of dimension $d$ which is contained in $\Pc_{c,d}(n)$.
\end{proof}

\begin{Lemma}
\label{boundary}
Each of the lattice points
\[
{\bf 0}, \, \, \eb_1, \ldots, \eb_{d-1}, \, \, \eb_d + n \sum_{i=1}^{d} \eb_i, \, \, \eb_d + (n+1) \sum_{i=1}^{d} \eb_i, \ldots,  \eb_d + cd \sum_{i=1}^{d} \eb_i.  
\]
belongs to the boundary of $\Pc_{c,d}(n)$. 
\end{Lemma}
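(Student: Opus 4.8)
The plan is to use supporting hyperplanes: a point of a convex polytope lies on its boundary exactly when some supporting hyperplane passes through it. Among the listed points, ${\bf 0}, \eb_1, \ldots, \eb_{d-1}$ together with $P := \eb_d + n\sum_{i=1}^{d}\eb_i$ and $Q := \eb_d + cd\sum_{i=1}^{d}\eb_i$ are vertices of $\Pc_{c,d}(n)$, hence automatically lie on the boundary. So the only genuine work concerns the intermediate points $\eb_d + k\sum_{i=1}^{d}\eb_i$ with $n < k < cd$.

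First I would record that, as a coordinate vector, $\eb_d + k\sum_{i=1}^{d}\eb_i = (k,\ldots,k,k+1)$, and that this equals the convex combination $(1-t)P + tQ$ with $t = (k-n)/(cd-n)\in[0,1]$ whenever $n \leq k \leq cd$. Thus every one of these points lies on the segment $\conv\{P,Q\}$, and it suffices to show that this segment is contained in the boundary of $\Pc_{c,d}(n)$.

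To that end I would exhibit the linear functional $g(x) = (d-1)x_d - \sum_{i=1}^{d-1}x_i$ and evaluate it at the vertices: $g({\bf 0}) = 0$ and $g(\eb_j) = -1$ for $1 \leq j \leq d-1$, while $g(P) = (d-1)(n+1) - (d-1)n = d-1$ and $g(Q) = (d-1)(cd+1) - (d-1)cd = d-1$. Since $d \geq 3$, this gives $g(v) \leq d-1$ at every vertex $v$, with equality precisely at $P$ and $Q$. By convexity $g \leq d-1$ holds throughout $\Pc_{c,d}(n)$, so $H = \{x : g(x) = d-1\}$ is a supporting hyperplane; as it contains both $P$ and $Q$, it contains the whole segment $\conv\{P,Q\}$. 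Because every point of the polytope lying on a supporting hyperplane is a boundary point, the segment — and in particular each $\eb_d + k\sum_{i=1}^{d}\eb_i$ — lies on the boundary of $\Pc_{c,d}(n)$. (One even sees that $H \cap \Pc_{c,d}(n)$ is exactly the edge joining $P$ and $Q$, since $P$ and $Q$ are the only vertices on $H$.)

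No step here is truly hard; the argument is essentially the construction of $g$ plus routine verification. The only points demanding care are the bookkeeping that confirms the intermediate lattice points are convex combinations of $P$ and $Q$, and the check that $g$ attains its maximum value $d-1$ at no vertex other than $P$ and $Q$ — the latter being what guarantees $H$ is a proper supporting hyperplane and the segment $\conv\{P,Q\}$ genuinely sits in the boundary rather than the interior.
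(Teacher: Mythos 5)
Your proposal is correct and follows essentially the same route as the paper: the paper's proof likewise writes each intermediate point $\eb_d + j\sum_{i=1}^{d}\eb_i$ as the convex combination $\frac{cd-j}{cd-n}P + \frac{j-n}{cd-n}Q$ and concludes it lies on the edge joining $P$ and $Q$. The only difference is that you additionally verify, via the explicit supporting hyperplane $(d-1)x_d - \sum_{i=1}^{d-1}x_i = d-1$, that this segment really is a face of $\Pc_{c,d}(n)$ — a point the paper simply asserts — which is a welcome but minor strengthening.
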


\begin{proof}
Let $n < j < c$.  Then 
\[
\eb_d + j \sum_{i=1}^{d} \eb_i = 
\frac{cd-j}{\,cd-n\,}(\eb_d + n \sum_{i=1}^{d} \eb_i) + \frac{j-n}{\,cd-n\,}(\eb_d + cd \sum_{i=1}^{d} \eb_i)
\]
belongs to an edge of $\Pc_{c,d}(n)$. 
\end{proof}

Now, in order to compute the volume of $\Pc_{c,d}(n)$, a triangulation of $\Pc_{c,d}(n)$ is studied.  Let $\sigma \subset \RR^d$ denote the lattice simplex of dimension $d$  with the vertices
\[
{\bf 0}, \, \, \eb_1, \ldots, \eb_{d-1}, \, \, \eb_d + cd \sum_{i=1}^{d} \eb_i.
\] 
Furthermore, for each $1 \leq i \leq d - 1$, we introduce the lattice simplex $\sigma(i) \subset \RR^d$ of dimension $d$ whose vertices are
\[
{\bf 0}, \, \, \eb_1, \ldots, \eb_{i-1}, \eb_{i+1}, \ldots, \eb_{d-1}, \, \, \eb_d + n \sum_{i=1}^{d} \eb_i, \, \, \eb_d + cd \sum_{i=1}^{d} \eb_i.
\]

A standard technique \cite[Chapter 4]{gtm279} for triangulations guarantees the following   

\begin{Lemma}
\label{triangulation}
Let $\Sigma$ denote the set of simplices consisting of $\sigma, \sigma(1), \ldots, \sigma(d-1)$ together with their faces.  Then $\Sigma$ is a triangulation of $\Pc_{c,d}(n)$.
\end{Lemma}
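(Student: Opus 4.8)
The plan is to exploit the fact that $\Pc_{c,d}(n)$ has exactly $d+2$ vertices. Since these vertices are the vertices of a $d$-dimensional polytope, they affinely span $\RR^d$, so the space of affine dependences among them is exactly one-dimensional; once I check that the (essentially unique) dependence has all of its coefficients nonzero, the vertex set is a single circuit. The theory of triangulations of a point configuration that forms a single circuit \cite[Chapter 4]{gtm279} then tells us that the two families obtained from the two sides of the associated Radon partition are both triangulations. So the strategy is to compute the Radon partition, write down the corresponding triangulation, and recognize it as $\Sigma$.

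First I would record the two apexes $P=\eb_d+n\sum_{i=1}^{d}\eb_i$ and $Q=\eb_d+cd\sum_{i=1}^{d}\eb_i$ and compute the affine dependence among ${\bf 0},\eb_1,\ldots,\eb_{d-1},P,Q$. A direct calculation yields
\[
d(cd-n)\,{\bf 0} + (n+1)\,Q \;=\; (cd-n)\sum_{i=1}^{d-1}\eb_i + (cd+1)\,P ,
\]
where the coefficients on each side sum to the same positive number $S=d(cd-n)+n+1$; because $0\le n<cd$, all coefficients are strictly positive. Dividing by $S$ exhibits a point lying simultaneously in the relative interiors of $\conv\{{\bf 0},Q\}$ and of $\conv\{\eb_1,\ldots,\eb_{d-1},P\}$, so the Radon partition of the vertex set $V$ is $Z_+=\{{\bf 0},Q\}$ and $Z_-=\{\eb_1,\ldots,\eb_{d-1},P\}$. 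The triangulation associated with the negative side is $\{\conv(V\setminus\{z\}):z\in Z_-\}$: deleting $\eb_i$ gives precisely $\sigma(i)$, and deleting $P$ gives precisely $\sigma$. Hence $\Sigma$ is exactly this family, which proves the lemma via the cited structure theorem. Each member is genuinely $d$-dimensional, since removing a single point from a circuit leaves an affinely independent set.

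Should one prefer a self-contained argument in place of the citation, two points remain. For the covering $\sigma\cup\sigma(1)\cup\cdots\cup\sigma(d-1)=\Pc_{c,d}(n)$ I would take $x=\sum_j\alpha_j v_j$ with $\alpha_j\ge 0$ and $\sum_j\alpha_j=1$, and add $t$ times the circuit relation $\sum_j\lambda_j v_j=0$ (normalized so that $\lambda_j>0$ on $Z_+$ and $\lambda_j<0$ on $Z_-$): increasing $t$ from $0$ until the first coefficient indexed by $Z_-$ vanishes keeps every coefficient nonnegative, since those indexed by $Z_+$ only grow, and so places $x$ in the simplex omitting that vertex of $Z_-$, i.e.\ in some $\sigma(i)$ or in $\sigma$. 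For the intersection property I would check that distinct members of $\Sigma$ meet in a common face, equivalently that their relative interiors are disjoint, by tracking the sign of the functional supplied by the circuit relation across the shared facets.

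The affine-dependence computation and the index matching are routine. The step I expect to demand the most care is the intersection property: confirming that the $d$ simplices assemble into a bona fide simplicial complex rather than merely covering $\Pc_{c,d}(n)$ with overlapping pieces. This is precisely the content that the circuit structure theorem packages cleanly, and carrying it out by hand reduces to monitoring the signs of the coefficients $\alpha_j+t\lambda_j$ as $x$ crosses the common facets.
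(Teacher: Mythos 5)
Your proposal is correct, and the dependence relation checks out: both sides agree coordinatewise and the coefficient sums on the two sides are equal, so the $d+2$ vertices form a circuit with Radon partition $\{\mathbf{0},Q\}\mid\{\eb_1,\ldots,\eb_{d-1},P\}$, and the triangulation attached to the second block is exactly $\Sigma$ (its simplex volumes even reproduce Corollary~\ref{volume}). The paper itself gives no argument beyond citing ``a standard technique'' from \cite[Chapter 4]{gtm279}, and your circuit/Radon-partition computation is precisely the standard instantiation of that technique for a $d$-polytope with $d+2$ vertices, so this is the same approach with the details actually supplied.
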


It is known that the volume of a simplex is computed by using the determinant.  In particular, the volume of $\sigma \subset \RR^d$ is $(cd + 1)/d!$ and the volume of each $\sigma(i) \subset \RR^d$ is $(cd - n)/d!$.  Corollary \ref{volume} follows from Lemma \ref{triangulation}.

\begin{Corollary}
\label{volume}
The volume of $\Pc_{c,d}(n)$ is
\[
{\rm vol}(\Pc_{c,d}(n)) = ((d-1)(cd - n) + (cd + 1))/d!. 
\]
\end{Corollary}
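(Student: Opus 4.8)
The plan is to read off ${\rm vol}(\Pc_{c,d}(n))$ directly from the triangulation $\Sigma$ supplied by Lemma \ref{triangulation}. A triangulation expresses $\Pc_{c,d}(n)$ as the union of its maximal simplices, and any two of these meet along a common face of dimension at most $d-1$, hence of $d$-dimensional Lebesgue measure zero. Consequently the volume is additive over the $d$ full-dimensional cells $\sigma, \sigma(1), \ldots, \sigma(d-1)$, and I would first record the identity
\[
{\rm vol}(\Pc_{c,d}(n)) = {\rm vol}(\sigma) + \sum_{i=1}^{d-1} {\rm vol}(\sigma(i)),
\]
which reduces the statement to knowing the volume of each individual piece.

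Next I would justify the two volume values quoted just before the corollary. For a lattice $d$-simplex with vertices ${\bf 0}, \vb_1, \ldots, \vb_d$, the volume equals $|\det(\vb_1, \ldots, \vb_d)|/d!$. For $\sigma$ the edge vectors emanating from ${\bf 0}$ are $\eb_1, \ldots, \eb_{d-1}$ together with $\eb_d + cd \sum_{i=1}^{d} \eb_i$; expanding the determinant of the matrix whose rows are these vectors along its final column immediately yields $cd+1$. For $\sigma(i)$ the edge vectors are $\eb_1, \ldots, \eb_{i-1}, \eb_{i+1}, \ldots, \eb_{d-1}$ together with $\eb_d + n \sum \eb_i$ and $\eb_d + cd \sum \eb_i$; the $d-2$ unit vectors collapse the determinant to the $2 \times 2$ minor in coordinates $i$ and $d$ of the two remaining rows, namely
\[
\det\begin{pmatrix} n & n+1 \\ cd & cd+1 \end{pmatrix} = n - cd,
\]
whose absolute value is $cd - n$ because $0 \leq n < cd$. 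This gives ${\rm vol}(\sigma) = (cd+1)/d!$ and ${\rm vol}(\sigma(i)) = (cd-n)/d!$ for each $1 \leq i \leq d-1$.

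Finally, substituting these into the additivity identity and collecting the $d-1$ equal summands gives
\[
{\rm vol}(\Pc_{c,d}(n)) = \frac{cd+1}{d!} + (d-1)\cdot\frac{cd-n}{d!} = \frac{(d-1)(cd-n) + (cd+1)}{d!},
\]
as claimed. I do not expect a genuine obstacle here, since the substantive content is already packaged in Lemma \ref{triangulation}, which certifies that $\sigma, \sigma(1), \ldots, \sigma(d-1)$ truly form a triangulation of $\Pc_{c,d}(n)$, and in the two elementary determinant evaluations above. The only point meriting a line of care is the measure-zero overlap of the maximal cells, which is automatic for any triangulation and is exactly what makes the volume additive.
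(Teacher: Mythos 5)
Your proposal is correct and follows exactly the route the paper takes: summing the volumes of the maximal simplices $\sigma, \sigma(1), \ldots, \sigma(d-1)$ of the triangulation from Lemma \ref{triangulation}, with each simplex volume computed by a determinant. The only difference is that you carry out the determinant evaluations explicitly, whereas the paper merely states the values $(cd+1)/d!$ and $(cd-n)/d!$ without computation.
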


Now, we come to the highlight of the present section.

\begin{Theorem}
\label{highlight}
The lattice polytope $\Pc_{c,d}(n)$ is Castelnuovo with 
\[
b(\Pc_{c,d}(n)) = cd - n + (d + 1), \, \, \, \, \, c(\Pc_{c,d}(n)) = c.
\]
\end{Theorem}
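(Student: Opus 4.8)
The plan is to prove all three assertions at once by squeezing the exact volume from Corollary~\ref{volume} against the lower bound theorem~(\ref{formula}), rather than by directly ruling out spurious interior or boundary lattice points. Lemma~\ref{interior} already exhibits $c$ distinct interior lattice points, so $c(\Pc_{c,d}(n)) \geq c$, and Lemma~\ref{boundary} exhibits a list of boundary lattice points whose cardinality is $1 + (d-1) + (cd - n + 1) = cd - n + (d+1)$ (the origin, the $d-1$ points $\eb_1, \ldots, \eb_{d-1}$, and the $cd - n + 1$ points $\eb_d + j \sum_{i=1}^{d} \eb_i$ with $n \leq j \leq cd$), so $b(\Pc_{c,d}(n)) \geq cd - n + (d+1)$.

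First I would substitute these two lower bounds into the right-hand side of~(\ref{formula}) and check the algebraic identity
\[
\frac{d \cdot c + (d-1)(cd - n + d + 1) - d^2 + 2}{d!} = \frac{(d-1)(cd-n) + (cd+1)}{d!},
\]
whose right-hand side is exactly ${\rm vol}(\Pc_{c,d}(n))$ by Corollary~\ref{volume}. After cancelling the common term $(d-1)(cd-n)$ this collapses to the trivial identity $dc = cd$, so no real computation is involved. The upshot is that the bound in~(\ref{formula}), evaluated at the \emph{minimal} admissible values of $c(\Pc_{c,d}(n))$ and $b(\Pc_{c,d}(n))$, already equals the true volume.

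The decisive step is a monotonicity argument. Since $d \geq 3$, the coefficients $d$ and $d-1$ in~(\ref{formula}) are strictly positive, so its right-hand side is strictly increasing in each of $c(\Pc_{c,d}(n))$ and $b(\Pc_{c,d}(n))$. Chaining~(\ref{formula}) with the two lower bounds therefore gives
\[
\begin{aligned}
{\rm vol}(\Pc_{c,d}(n))
&\geq \frac{d \cdot c(\Pc_{c,d}(n)) + (d-1)\, b(\Pc_{c,d}(n)) - d^2 + 2}{d!} \\
&\geq \frac{d \cdot c + (d-1)(cd - n + d + 1) - d^2 + 2}{d!} = {\rm vol}(\Pc_{c,d}(n)),
\end{aligned}
\]
which forces both inequalities to be equalities. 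Equality in the first line is precisely the Castelnuovo condition, while equality in the second, combined with the strict positivity of the coefficients, forces $c(\Pc_{c,d}(n)) = c$ and $b(\Pc_{c,d}(n)) = cd - n + (d+1)$ simultaneously.

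I expect the only conceptual obstacle to be recognizing that this squeeze removes the need for any separate enumeration argument: the exact triangulation-based volume of Corollary~\ref{volume} together with the sharp form of~(\ref{formula}) does the counting automatically. The sole hypothesis to watch is the requirement $c > 0$ in~(\ref{formula}), which holds since $c(\Pc_{c,d}(n)) \geq c \geq 1$.
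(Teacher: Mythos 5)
Your proposal is correct and follows essentially the same route as the paper: both use Lemmas \ref{interior} and \ref{boundary} to get the lower bounds $c(\Pc_{c,d}(n)) \geq c$ and $b(\Pc_{c,d}(n)) \geq cd - n + (d+1)$, substitute them into (\ref{formula}), and observe that the resulting chain of inequalities closes up at ${\rm vol}(\Pc_{c,d}(n))$ from Corollary \ref{volume}, forcing equality throughout. Your explicit remark about the positivity of the coefficients $d$ and $d-1$ is a nice touch that the paper leaves implicit, but the argument is the same squeeze.
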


\begin{proof}
Lemmas \ref{interior} and \ref{boundary} say that  
$c(\Pc_{c,d}(n)) \geq c$
and
$b(\Pc_{c,d}(n)) \geq cd - n +(d + 1)$.
By virtue of (\ref{formula}) together with Corollary \ref{volume}, it follows that
\begin{eqnarray*}
{\rm vol}(\Pc_{c,d}(n)) 
& \geq & (d \cdot c(\Pc_{c,d}(n)) + (d-1) \cdot b(\Pc_{c,d}(n)) - d^2 + 2)/d! \\
& \geq & (cd + (d-1)(cd - n +(d + 1)) - d^2 + 2)/d! \\
& = & ((d-1)(cd - n) + cd + (d-1)(d+1) - d^2 + 2)/d! \\
& = & ((d-1)(cd - n) + (cd + 1))/d! \\
& = & {\rm vol}(\Pc_{c,d}(n)). 
\end{eqnarray*}
Hence $c(\Pc_{c,d}(n)) = c$
and
$b(\Pc_{c,d}(n)) = cd - n +(d + 1)$.  Furthermore, the equality holds in (\ref{formula}).  Thus $\Pc_{c,d}(n)$ is Castelnuovo, as desired.  
\end{proof}

\section{Castelnuovo polytopes of prism type}
Let $d \geq 3$ be an integer.  Let, as before, $\eb_1, \ldots, \eb_d$ denote the canonical unit coordinate vectors of $\RR^d$ and $\wb = - \eb_1 - \cdots -\eb_{d-1}$.  
Let $\Qc(c) \subset \RR^d$ denote the prism of dimension $d$, where $c \geq 0$ is an integer, whose vertices are
\[
\eb_1, \ldots, \eb_{d-1}, \, \wb, \, \eb_1 + (c+1) \eb_d, \ldots, \eb_{d-1} + (c+1) \eb_d, \, \wb + (c+1) \eb_d.
\]    
One has    
\begin{eqnarray}
\label{RK}
b(\Qc(c)) 
= cd + (2d+2), \, \, \, c(\Qc(c)) = c, \, \, \, {\rm vol}(\Qc(c)) = (c+1)d^2/d!.
\end{eqnarray}
It is observed in \cite[Example 3.1]{kawaguchi} that $\Qc(c)$ is Castelnuovo if $c \geq 1$.

Now, imitating the prism $\Qc(c) \subset \RR^d$, one can introduce the lattice polytope $$\Qc_j(c) \subset \RR^d,$$ called {\em of prism type}, of dimension $d$, where $c \geq 0$ and $0 \leq j \leq d-1$ are integers, whose vertices are
\[
\eb_1, \ldots, \eb_{d-1}, \, \wb, \, (c+1) \eb_d, \, \eb_1 + (c+1) \eb_d, \ldots, 
\eb_j + (c+1) \eb_d, 
\]
\[
\eb_{j+1} + c \eb_d, \ldots, \eb_{d-1} + c \eb_d, \, \wb + c \eb_d.
\]    

A routine computation based on the configuration of the lattice points of $\Qc_j(c)$ yields that 
\begin{eqnarray}
\label{Boston}
b(\Qc_j(c)) 
= cd + (d+2+j), \, \, \, \, \, c(\Qc_j(c)) = c
\end{eqnarray}
and
\begin{eqnarray}
\label{Chris}
{\rm vol}(\Qc_j(c)) = {\rm vol}(\Qc(c-1)) + {\rm vol}(\Qc_j(0)).
\end{eqnarray}

\begin{Lemma}
\label{abcdefg}
One has
\[
{\rm vol}(\Qc_j(0)) = (jd + d  - j)/d!
\]
\end{Lemma}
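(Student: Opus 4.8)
The plan is to compute ${\rm vol}(\Qc_j(0))$ by realizing $\Qc_j(0)$ as the region under the graph of its ``ceiling'' function over the base simplex. Since every defining vertex has $x_d$-coordinate $0$ or $1$, the polytope lies in the slab $0\le x_d\le 1$; the hyperplane $x_d=0$ supports it along the $(d-1)$-simplex $\Delta=\conv(\eb_1,\dots,\eb_{d-1},\wb)$, while the vertices of $x_d$-coordinate $1$ lie above the points $\mathbf{0},\eb_1,\dots,\eb_j$ of $\Delta$ (here $\mathbf{0}=\frac1d(\eb_1+\cdots+\eb_{d-1}+\wb)$ is the centroid of $\Delta$). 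Writing $\bar x$ for the projection to the first $d-1$ coordinates, I would define $h\colon\Delta\to\RR_{\ge 0}$ by letting $h(\bar x)$ be the largest $t$ with $(\bar x,t)\in\Qc_j(0)$. Then each fibre is the segment $[0,h(\bar x)]$, so ${\rm vol}(\Qc_j(0))=\int_\Delta h(\bar x)\,d\bar x$, and $h$ is the concave upper envelope of the $d$ lifted points obtained by assigning height $1$ to $\mathbf{0}$ and to $\eb_1,\dots,\eb_j$, and height $0$ to $\eb_{j+1},\dots,\eb_{d-1}$ and to $\wb$.

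Next I would subdivide $\Delta$ by coning from its centroid $\mathbf{0}$: for $1\le i\le d$ let $F_i$ be the facet of $\Delta$ opposite its $i$-th vertex and put $\sigma_i=\conv(\mathbf{0},F_i)$, so the $\sigma_i$ are $(d-1)$-simplices tiling $\Delta$. The crux is to check that $h$ restricts to an affine function on each $\sigma_i$. Denote the vertices of $\Delta$ by $v_1,\dots,v_d$ and their assigned heights by $\omega_1,\dots,\omega_d\in\{0,1\}$ (so $\sum_i\omega_i=j$), and let $H_i$ be the affine function on $\RR^{d-1}$ with $H_i(\mathbf{0})=1$ and $H_i(v_k)=\omega_k$ for $k\ne i$. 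Using $v_i=d\,\mathbf{0}-\sum_{k\ne i}v_k$ one computes the extrapolated value $H_i(v_i)=d-\sum_{k\ne i}\omega_k=d-j+\omega_i$, which exceeds $\omega_i$ precisely because $j<d$. Hence the lifted point over $v_i$ lies strictly below the graph of $H_i$; as all other lifted points lie on it, $H_i$ is an upper supporting function whose contact cell is exactly $\sigma_i$, and therefore $h=H_i$ on $\sigma_i$.

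Finally I would integrate cell by cell. Because $\mathbf{0}$ is the centroid of $\Delta$, each cone $\sigma_i$ has the same $(d-1)$-volume $\frac1d{\rm vol}_{d-1}(\Delta)$; and ${\rm vol}_{d-1}(\Delta)=d^2/d!$ follows from (\ref{RK}), since $\Qc(c)$ is the right prism of height $c+1$ over $\Delta$. Thus ${\rm vol}_{d-1}(\sigma_i)=1/(d-1)!$ for every $i$. Since $h$ is affine on $\sigma_i$, the integral of $h$ over $\sigma_i$ equals ${\rm vol}_{d-1}(\sigma_i)$ times the average of the values of $h$ at the $d$ vertices of $\sigma_i$, namely $\frac1d\big(h(\mathbf{0})+\sum_{k\ne i}\omega_k\big)=\frac1d(1+j-\omega_i)$. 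Summing over $i$ gives ${\rm vol}(\Qc_j(0))=\frac{1}{d!}\sum_{i=1}^d(1+j-\omega_i)=\frac{d(1+j)-j}{d!}=(jd+d-j)/d!$, as claimed.

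I expect the main obstacle to be the middle step: correctly identifying the domains of linearity of the ceiling function $h$, that is, proving that the elementary cone-from-centroid subdivision of $\Delta$ really is the subdivision induced by $h$. Once the inequality $H_i(v_i)=d-j+\omega_i>\omega_i$ is in hand this is immediate, but it is the one place where the specific geometry of $\Qc_j(0)$ (the centroid sitting at height $1$, together with $j\le d-1$) is used. The remaining bookkeeping — the equal volumes of the cones and the average-value formula for integrating an affine function over a simplex — is routine, and the whole argument can equivalently be phrased as a triangulation of $\Qc_j(0)$ into truncated prisms over the $\sigma_i$.
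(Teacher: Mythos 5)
Your argument is correct, but it takes a genuinely different route from the paper's. The paper computes ${\rm vol}(\Qc_j(0))$ by exhibiting an explicit unimodular triangulation: it cones from the origin of $\RR^d$ over a combinatorially described family $\Bc$ of $d$-element subsets $W$ of the vertex set, each simplex $\conv(W\cup\{{\bf 0}\})$ having volume $1/d!$, and then counts $|\Bc|=d+j(d-1)$ by classifying the $W$ according to the smallest index $\xi$ with $\eb'_\xi\in W$ (the fact that these simplices triangulate $\Qc_j(0)$ is delegated to a standard technique). You instead integrate the ceiling function over the base simplex $\Delta$: you identify $h$ as the concave envelope of the lifted vertex heights, prove via the extrapolation inequality $H_i(v_i)=d-j+\omega_i>\omega_i$ that $h$ agrees with the affine function $H_i$ on each cone $\sigma_i$ of the centroid subdivision of $\Delta$, and finish with the average-value formula for integrating an affine function over a simplex. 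Both arguments are sound, and your key step (that $H_i\geq h$ globally because $H_i$ dominates every data point, while $h\geq H_i$ on $\sigma_i$ by concavity) is exactly what is needed; whether $\sigma_i$ is the full contact cell is irrelevant. What each buys: the paper's triangulation gives the stronger byproduct that $\Qc_j(0)$ is unimodularly triangulable, while yours is more self-contained geometrically, its only external input being ${\rm vol}_{d-1}(\Delta)=d^2/d!$, which you legitimately read off from (\ref{RK}) (and which is in any case the one-line computation $\det(I+J)=d$ divided by $(d-1)!$). One trivial slip: the concave envelope is taken over $d+1$ lifted points (the centroid plus the $d$ vertices of $\Delta$), not $d$; nothing downstream is affected.
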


\begin{proof}
Let
\[
V = \{ \eb_1, \ldots, \eb_{d-1}, \, \wb, \, \eb' _0 = \eb_d, \, \eb' _1 = \eb_1 + \eb_d, \ldots, \eb'_j = \eb_j + \eb_d\} 
\]
denote the set of vertices of $\Qc_j(0)$ and ${\bf 0} = (0,\ldots,0) \in \RR^d$.  Let $\Bc$ denote the set of those subsets $W \subset V$ for which
\begin{itemize}
\item[(i)]
$|W| = d$;
\item[(ii)]
$\{\eb' _s, \eb_t\} \nsubseteq W, \, \, \, \, \, 0 \leq s < t \leq j$;
\item[(iii)]
$W \neq \{\eb_1, \ldots, \eb_{d-1}, \, \wb\}$;
\item[(iv)]
$W \neq \{\eb_1, \ldots, \eb_{\xi - 1}, \, \eb' _\xi, \, \eb' _{\xi+1}, \ldots, \eb' _j, \, \eb_{j+1}, \ldots, \eb_{d-1}, \, \wb \}, \, \, \, \, \, 1 \leq \xi \leq j$.
\end{itemize} 
If $W \in \Bc$, then $\sigma_W=\conv(W \cup \{{\bf 0}\}) \subset \RR^d$ is a simplex of dimension $d$ and ${\rm vol}(\sigma_W) = 1/d!$.  Let $\Sigma$ denote the set of simplices consisting of $\sigma_W, \, W \in \Bc$, together with their faces.  A standard technique for triangulations \cite[Chapter 4]{gtm279} guarantees that $\Sigma$ is a triangulation of $\Qc_j(0)$.  Hence ${\rm vol}(\Qc_j(0)) = |\Bc|/d!$. 

Now, one claims $|\Bc| = jd + d - j$.  Let $W \in \Bc$ and $\xi_W = \min\{ \xi : \eb'_\xi \in W\}$.  The number of $W \in \Bc$ with $\xi_W = 0$ is $d$. The number of $W \in \Bc$ with $\xi_W = i$ is $d-1$ for $1 \leq i \leq j$.  Thus $|\Bc| = d + j(d-1)$, as desired.  
\end{proof}

Corollary \ref{XXXXX} follows from Lemma \ref{abcdefg} together with (\ref{Chris}).

\begin{Corollary}
\label{XXXXX}
One has 
\[
{\rm vol}(\Qc_j(c)) = (cd^2 + jd + d - j)/d!.
\]
Hence $\Qc_j(c)$ is Castelnuovo if $c \geq 1$. 
\end{Corollary}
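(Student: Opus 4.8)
The plan is to obtain the volume formula by direct substitution into (\ref{Chris}), and then to establish the Castelnuovo property by comparing this volume with the right-hand side of (\ref{formula}).

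First I would read off ${\rm vol}(\Qc(c-1))$ from (\ref{RK}): replacing $c$ by $c-1$ there gives ${\rm vol}(\Qc(c-1)) = cd^2/d!$. Combining this with Lemma \ref{abcdefg}, which supplies ${\rm vol}(\Qc_j(0)) = (jd + d - j)/d!$, and inserting both into the decomposition (\ref{Chris}) yields
\[
{\rm vol}(\Qc_j(c)) = \frac{cd^2}{d!} + \frac{jd + d - j}{d!} = \frac{cd^2 + jd + d - j}{d!},
\]
which is exactly the asserted formula.

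Next, to prove that $\Qc_j(c)$ is Castelnuovo when $c \geq 1$, I would substitute the lattice-point data $c(\Qc_j(c)) = c$ and $b(\Qc_j(c)) = cd + (d+2+j)$ recorded in (\ref{Boston}) into the right-hand side of (\ref{formula}) and check that the result equals the volume just computed. Expanding $(d-1)(cd+d+2+j)$ and adding $d\cdot c$, the quadratic-in-$d$ contributions combine with the correction term $-d^2+2$ so that the $d^2$ and constant terms cancel, leaving precisely $cd^2 + jd + d - j$. Thus the right-hand side of (\ref{formula}) equals $(cd^2 + jd + d - j)/d! = {\rm vol}(\Qc_j(c))$, so (\ref{formula}) holds with equality. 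Since (\ref{formula}) presupposes $c(\Pc) > 0$, the hypothesis $c \geq 1$ is exactly what guarantees this inequality applies, and under it $\Qc_j(c)$ is Castelnuovo.

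The argument is entirely routine, as all the ingredients — the volume via (\ref{Chris}) and Lemma \ref{abcdefg}, and the counts $b$ and $c$ via (\ref{Boston}) — are already in hand. The only point demanding a little care is the algebraic simplification in the second step, where one must correctly track the cancellation of the terms of degree two in $d$; there is no genuine obstacle beyond this bookkeeping.
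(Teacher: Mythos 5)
Your proposal is correct and follows the same route as the paper, which derives the volume formula directly from Lemma \ref{abcdefg} combined with (\ref{Chris}) and (\ref{RK}), and then verifies equality in (\ref{formula}) using the counts in (\ref{Boston}). You merely spell out the algebraic cancellation that the paper leaves implicit.
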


\section{A partial list of the Castelnuovo triplets}
A partial list of the Castelnuovo triplets will be presented.  

\begin{Example}
\label{HT}
Let $c \geq 1$ and $d \geq 3$ be integers.  It is shown \cite{hibi_tsuchiya} that the lattice polytope $\Pc_{c,d} \subset \RR^d$ of dimension $d$ whose vertices are 
\[
{\bf 0}, \, \, \eb_1, \ldots, \eb_{d-1}, \, \, \eb_d + cd \sum_{i=1}^{d} \eb_i
\]
is Castelnuovo whose Castelnuovo triplet is $(d + 1, c, d)$.  
\end{Example}

Now, Theorem \ref{YYYYY} below follows from Theorem \ref{highlight} together with (\ref{RK}), (\ref{Boston}) and Example \ref{HT}. 

\begin{Theorem}
\label{YYYYY}
Let $c \geq 1, \, d \geq 3$ and $d + 1 \leq b \leq cd + (2d + 2)$ be integers.  Then the triplet 
$(b, c, d)$ is Castelnuovo.  
\end{Theorem}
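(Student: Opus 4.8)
The plan is to realize every admissible value of $b$ as $b(\Pc)$ for some Castelnuovo polytope $\Pc$ carrying exactly $c$ interior lattice points, by stitching together the four families already constructed. Since the equality $c(\Pc) = c$ holds simultaneously for all of these families, the task reduces to showing that, as the relevant parameters vary, the quantity $b(\Pc)$ sweeps out the entire interval of integers from $d+1$ up to $cd + (2d+2)$ with no gaps.

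First I would treat the lower range $d+1 \leq b \leq cd + (d+1)$. By Theorem \ref{highlight}, the bipyramid-type polytope $\Pc_{c,d}(n)$ satisfies $b(\Pc_{c,d}(n)) = cd - n + (d+1)$ and $c(\Pc_{c,d}(n)) = c$ for every $n$ with $0 \leq n < cd$. Letting $n$ run from $cd-1$ down to $0$, the value $cd - n + (d+1)$ runs through every integer from $d+2$ up to $cd + (d+1)$. The single remaining value $b = d+1$ is supplied by Example \ref{HT}, whose polytope is Castelnuovo with triplet $(d+1, c, d)$. Hence every integer $b$ with $d+1 \leq b \leq cd + (d+1)$ is Castelnuovo for the given $c$ and $d$.

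Next I would treat the upper range $cd + (d+2) \leq b \leq cd + (2d+2)$. By (\ref{Boston}) together with Corollary \ref{XXXXX}, the prism-type polytope $\Qc_j(c)$ is Castelnuovo with $b(\Qc_j(c)) = cd + (d+2+j)$ and $c(\Qc_j(c)) = c$ for $0 \leq j \leq d-1$; letting $j$ run from $0$ to $d-1$ yields every integer from $cd + (d+2)$ up to $cd + (2d+1)$. The top value $b = cd + (2d+2)$ is provided by the prism $\Qc(c)$, which is Castelnuovo for $c \geq 1$ by (\ref{RK}). Thus every integer $b$ with $cd + (d+2) \leq b \leq cd + (2d+2)$ is Castelnuovo as well.

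Finally, I would observe that the two ranges meet without overlap or gap: the lower range ends at $cd + (d+1)$ and the upper range begins at $cd + (d+2)$, so their union is precisely $\{\, b \in \ZZ : d+1 \leq b \leq cd + (2d+2) \,\}$, which completes the proof. I do not expect a genuine obstacle; the only point demanding care is the bookkeeping at the junction, namely checking that the endpoints $cd + (d+1)$ and $cd + (d+2)$ of the bipyramid and prism families are consecutive, and that the two extremal values $b = d+1$ and $b = cd + (2d+2)$, which lie just outside the swept ranges of the two parametrized families, are recovered from Example \ref{HT} and the prism $\Qc(c)$ respectively.
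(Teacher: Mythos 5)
Your proof is correct and follows exactly the paper's intended argument: the paper derives Theorem \ref{YYYYY} from Theorem \ref{highlight} (bipyramid polytopes covering $d+2 \leq b \leq cd+d+1$), Example \ref{HT} ($b=d+1$), (\ref{Boston}) with Corollary \ref{XXXXX} (prism-type polytopes covering $cd+d+2 \leq b \leq cd+2d+1$), and (\ref{RK}) ($b = cd+2d+2$). Your bookkeeping at the endpoints and at the junction of the two ranges matches what the paper leaves implicit.
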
 


\begin{Conjecture}
\label{abcdefgh}
Let $c \geq 2, \, d \geq 3$ and $b \geq d + 1$ be integers.  Then the triplet $(b, c, d)$ is Castelnuovo if and only if $d + 1 \leq b \leq cd + (2d + 2)$.  
\end{Conjecture}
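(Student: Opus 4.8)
The forward implication is already in hand: if $d+1 \le b \le cd + (2d+2)$, then Theorem \ref{YYYYY} produces a Castelnuovo polytope realizing the triplet $(b,c,d)$, so $(b,c,d)$ is Castelnuovo. The content of the conjecture is thus the reverse implication, the upper bound: every Castelnuovo triplet $(b,c,d)$ with $c \ge 2$ and $d \ge 3$ satisfies $b \le cd + (2d+2)$. This is the exact higher-dimensional analogue of Scott's inequality $b \le 2c+6$ for lattice polygons with $c \ge 2$ \cite{Scott}, and the hypothesis $c \ge 2$ is essential: already for $c=1$ the sporadic polygon $3\Delta$ violates $b \le 2c+6$, and one expects analogous exceptions in each dimension.

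The plan is to pass to the $\delta$-vector (Ehrhart $h^*$-vector) $(\delta_0, \ldots, \delta_d)$ of $\Pc$, determined by $\sum_{m \ge 0} |m\Pc \cap \ZZ^d|\,t^m = (\sum_{i=0}^d \delta_i t^i)/(1-t)^{d+1}$, where $\delta_0 = 1$, $\delta_1 = b + c - (d+1)$, $\delta_d = c$, and $\sum_i \delta_i = d!\,{\rm vol}(\Pc)$. Hibi's lower bound theorem \cite{hibi} asserts $\delta_1 \le \delta_i$ for $2 \le i \le d-1$ whenever $c \ge 1$; summing these recovers (\ref{formula}), and equality in (\ref{formula}) holds precisely when $\delta_1 = \delta_2 = \cdots = \delta_{d-1}$. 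Hence a polytope with $c \ge 1$ is Castelnuovo if and only if its $\delta$-vector is flat in the middle, of the form $(1, s, s, \ldots, s, c)$ with $s = \delta_1 = b + c - (d+1)$. A direct substitution shows the target bound $b \le cd + (2d+2)$ is equivalent to the single inequality $s \le (c+1)(d+1)$. Since the prism $\Qc(c)$ of Section 2 attains $s = (c+1)(d+1)$, the conjecture says exactly that $\Qc(c)$ has the maximal possible $\delta_1$, equivalently the maximal $b$, among all Castelnuovo $d$-polytopes with $c$ interior lattice points.

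First I would attempt to derive $s \le (c+1)(d+1)$ from the known linear relations among $\delta$-vector entries, but this is precisely where the difficulty lies: the monotonicity inequalities of Stanley and the lower bound inequalities of Hibi all degenerate to the trivial statement $c \ge 1$ on a vector that is constant in the middle, and so yield no upper bound on $s$. The hypothesis $c \ge 2$ must therefore be used in an essential, non-formal way, exactly as in Scott's proof. I would pursue two complementary routes. The combinatorial route generalizes Scott: exploit the flatness $\delta_1 = \cdots = \delta_{d-1}$ to force severe geometric structure on $\Pc$ (one expects such polytopes to be lattice pyramids, Cayley sums, or bounded-width configurations over the standard simplex) and then bound $b(\Pc)$ directly. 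The algebro-geometric route interprets the Castelnuovo condition as saying the projective toric variety $X_\Pc \subset \PP^{\,b+c-1}$ attains Castelnuovo's genus bound; the classification of varieties attaining that bound places them on a variety of minimal degree (a rational normal scroll or a Veronese), and reading off the scroll structure of $X_\Pc$ should force $s \le (c+1)(d+1)$, with the prism $\Qc(c)$ corresponding to the extremal scroll.

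The main obstacle is exactly the inequality $s = \delta_1 \le (c+1)(d+1)$: unlike the ``if'' direction it does not reduce to (\ref{formula}), and it is implied by none of the standard $\delta$-vector inequalities, all of which are flat on Castelnuovo vectors. A successful proof must inject genuinely new information, either a fresh Scott-type inequality valid in all dimensions or the classification of toric varieties of Castelnuovo type, and in either case it must exploit $c \ge 2$ to exclude the sporadic high-$b$ simplices that already occur when $c = 1$.
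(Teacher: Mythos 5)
You should be clear at the outset that the statement you were given is a \emph{Conjecture}: the paper offers no proof of it, and explicitly records that it is open for $d \geq 3$ (and true for $d = 2$ by Scott \cite{Scott}). So there is no proof in the paper to compare yours against, and your proposal, by its own admission, does not close the conjecture either. What you do establish is correct: the ``if'' direction is exactly the paper's Theorem \ref{YYYYY}, obtained by combining the bipyramid-type polytopes $\Pc_{c,d}(n)$ (Theorem \ref{highlight}), the prism-type polytopes $\Qc_j(c)$, the prism $\Qc(c)$, and the simplices of \cite{hibi_tsuchiya}, which together realize every $b$ with $d+1 \leq b \leq cd + (2d+2)$. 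Your $\delta$-vector bookkeeping also checks out: with $\delta_1 = b + c - (d+1)$ and $\delta_d = c$, Hibi's inequalities $\delta_1 \leq \delta_i$ from \cite{hibi} sum to (\ref{formula}), equality in (\ref{formula}) is equivalent to $\delta_1 = \cdots = \delta_{d-1}$, the bound $b \leq cd + (2d+2)$ is equivalent to $\delta_1 \leq (c+1)(d+1)$, and $\Qc(c)$ attains this value by (\ref{RK}). This reformulation is a genuinely useful framing that the paper does not spell out.

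The gap is the entire ``only if'' direction, and you have correctly diagnosed why it is hard: on a vector of the form $(1, s, \ldots, s, c)$ every known linear inequality for $\delta$-vectors is either automatically satisfied or degenerates, so no formal argument bounds $s$ from above, and the hypothesis $c \geq 2$ currently has no known structural consequence in dimension $d \geq 3$. Your two proposed routes are reasonable research directions but remain unexecuted sketches. In particular, be cautious with the algebro-geometric route: the terminology ``Castelnuovo polytope'' does come from Kawaguchi's sectional-genus interpretation \cite{kawaguchi}, but translating equality in Castelnuovo's genus bound into the eventual conclusion ``$X_\Pc$ sits on a variety of minimal degree, hence $s \leq (c+1)(d+1)$'' requires the projective embedding to be well-behaved (the relevant linear system, normality of the toric variety, and the passage from scroll structure back to lattice-point counts are all nontrivial steps, none of which you supply). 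Likewise, the combinatorial route presupposes a structure theory for polytopes with flat $\delta$-vectors, which does not currently exist. In short: your proposal correctly reproves the half of the equivalence that the paper proves, correctly identifies the open half and the obstruction, but proves nothing beyond what Theorem \ref{YYYYY} already gives.
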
 

The Conjecture \ref{abcdefgh} is true for $d = 2$ (Scott \cite{Scott}) and is open for $d \geq 3$.

\begin{Example}
Let $d \geq 3$ and $\Cc_d \subset \RR^d$ the standard unit cube which is the convex hull of the $2^d$ points $\pm \eb_1 \pm \eb_2 \pm \cdots \pm \eb_d$ of $\RR^d$.  It follows from a simple computation that $\Cc_d$ is Castelnuovo if and only if $d = 3$.  When $d = 3$, one has
\[
b(\Cc_3) = 26, \, \, \, \, \, c(\Cc_3) = 1, \, \, \, \, \, {\rm vol}(\Cc_3) = 8.
\]
\end{Example}

\begin{Example}
Let $d = 3$ and $\Qc \subset \RR^3$ the lattice polytope with the vertices
\[
(-1,-1,1), \, (2,-1,1), \, (-1,2,1), \, (-1,-1,-1), \, (2,-1,-1), \, (-1,2,-1)
\] 
Then $\Qc$ is Castelnuovo with 
\[
b(\Pc) = 29, \, \, \, \, \, c(\Pc) = 1, \, \, \, \, \, {\rm vol}(\Pc) = 9.
\]
\end{Example}

\end{document}